\newtheorem{theorem}{Theorem}[section] 
\newtheorem{lemma}[theorem]{Lemma}     
\newtheorem{definition}[theorem]{Definition}
\newtheorem{corollary}[theorem]{Corollary}
\numberwithin{equation}{section}
\newcommand{\K}{\mathbb K}
\renewcommand{\H}{\mathbb H}
\newcommand{\N}{\mathbb N}
\newcommand{\R}{\mathbb R}
\newcommand{\Normal}{\mathcal{N}}
\newcommand{\Mean}{\mathbb E\,}
\DeclareMathOperator{\Var}{Var}
\DeclareMathOperator{\Cov}{Cov}
\newcommand{\Fi}{\mathbf \Phi}
\newcommand{\fii}{\mathbf \phi}
\newcommand{\B}{\mathbf \Psi}
\newcommand{\I}{\mathbf{Id}}
\newcommand{\x}{\mathbbm x}
\newcommand{\xS}{\mathbbm x^{S}}
\newcommand{\y}{\mathbbm y}
\newcommand{\0}{\mathbf 0}
\newcommand*{\ip}[1]{\left\langle{#1}\right\rangle} 
\newcommand*{\norm}[1]{\left\lVert{#1}\right\rVert} 
\newcommand*{\conj}[1]{\overline{#1}} 
\newcommand*{\herm}[1]{#1^*} 
\newcommand*{\abs}[1]{\left|{#1}\right|} 
\renewcommand{\i}{\mathbf i}
\renewcommand{\j}{\mathbf j}
\renewcommand{\k}{\mathbf k}
\DeclareMathOperator{\supp}{supp}
\newcommand{\Prob}{\mathbb P}
\newcommand{\sphere}{\mathcal{S}}
\begin{document}

\begin{center}
\Large
\textbf{Quaternion Gaussian matrices satisfy the RIP}\\
\end{center}


\begin{center}
\begin{tabular}{cc}
	\textbf{Agnieszka Bade\'nska\textsuperscript{1}} & \textbf{{\L}ukasz B{\l}aszczyk\textsuperscript{1,2}} \\
	badenska@mini.pw.edu.pl & l.blaszczyk@mini.pw.edu.pl 
\medskip \\
	\textsuperscript{1} Faculty of Mathematics  & \textsuperscript{2} Institute of Radioelectronics \\ and Information Science & and Multimedia Technology \\
	Warsaw University of Technology & Warsaw University of Technology \\
	ul. Koszykowa 75 & ul. Nowowiejska 15/19 \\
	00-662 Warszawa, Poland & 00-665 Warszawa, Poland
\end{tabular}
\end{center}

\bigskip\noindent
\textbf{Keywords}: quaternion Gaussian random matrix, restricted isometry property, sparse signals.

\bigskip
\normalsize
\begin{abstract}
We prove that quaternion Gaussian random matrices satisfy the~restricted isometry property (RIP) with overwhelming probability. We also explain why the~restricted isometry random variables (RIV) approach is not appropriate for drawing conclusions on restricted isometry constants.
\end{abstract}

\section{Introduction}

One of the~conditions which guarantees exact reconstruction of a~sparse signal (real, complex or quaternion) -- by $\ell_1$-norm minimization -- from a~few number of its linear measurements is that the~measurement matrix satisfies the~celebrated \textit{restricted isometry property} (RIP) with a~sufficiently small constant. The~notion of restricted isometry constants was introduced by Cand\`es and Tao in~\cite{ct} and repeatedly considered afterwards. The~concept was also generalized to quaternion signals \cite{bb1,bb2}.
\begin{definition}\label{qRIPdef}
Let $\Fi\in\H^{m\times n}$ and $s\in\{1,\ldots,n\}$. The~$s$-restricted isometry constant of~$\Fi$ is the~smallest number $\delta_s$ with the~property that
\begin{equation}\label{qRIP}
	 \left(1-\delta_s\right)\norm{\x}_2^2\leq\norm{\Fi\x}_2^2\leq\left(1+\delta_s\right)\norm{\x}_2^2 
\end{equation}
for all $s$-sparse quaternion vectors $\x\in\H^n$.
\end{definition}
\noindent Recall that we call a~vector (signal) $s$-sparse if it has at most $s$ nonzero coordinates and $\H$ denotes the~quaternion algebra.

It is known that e.g. real Gaussian and Bernoulli random matrices, also partial Discrete Fourier Transform matrices satisfy the~RIP (with overwhelming probability)~\cite{introCS}, however, until recently there were no known examples of quaternion matrices satisfying this condition. In~{\cite[Lemma~3.2]{bb1}} we proved that if a~real matrix $\Fi\in\R^{m\times n}$ satisfies the~inequalities~\eqref{qRIP} for real $s$-sparse vectors~$\x\in\R^n$, then it also satisfies it -- with the~same constant~$\delta_s$ -- for $s$-sparse quaternion vectors~$\x\in\H^n$. This was a~first step towards developing theoretical background of compressed sensing methods in the~quaternion algebra, since we also showed that it is possible to reconstruct sparse quaternion vectors from a~small number of their linear measurements if an~appropriate restricted isometry constant of a~\textit{real} measurement matrix $\Fi\in\R^{m\times n}$ is sufficiently small~{\cite[Corollary~5.1]{bb1}}.

Later we extended the above-mentioned result to the~full quaternion setting -- with the~expected assumption that a~quaternion measurement matrix $\Fi\in\H^{m\times n}$ satisfies the~RIP with a~sufficiently small constant~{\cite[Corollary~5.1]{bb2}}. The~question of existence of quaternion matrices satisfying the~RIP was, however, still open. 
Let us also mention a~very interesting recent result of N.~Gomes, S.~Hartmann and U.~K\"ahler concerning the~quaternion Fourier matrices -- arising in color representation of images~\cite{ghk}. They showed that with high probability such matrices allow a~sparse reconstruction (again by $\ell_1$-norm minimization) {\cite[Theorem~3.2]{ghk}}. Their proof is straightforward and does not use the~RIP notion, however, they also point out that it is not clear whether quaternion sampling matrices fulfill the~RIP condition.

It has been believed that quaternion Gaussian random matrices satisfy the~RIP and, therefore, they have been widely used in numerical experiments \cite{barthelemy2015,hawes2014,l1minqs} but there was a~lack of theoretical justification for this conviction. In this article we prove that this hypothesis is true, i.e. \textit{quaternion Gaussian random matrices satisfy the~RIP}, and we provide estimates on matrix sizes that guarantee the~RIP with overwhelming probability (Theorem~\ref{GaussQRIP}). The existence of quaternion matrices satisfying the~RIP, together with the~main results of~\cite{bb2}, constitute the~theoretical foundation of the~classical compressed sensing methods in the~quaternion algebra. 

\medskip
Apart from the aforementioned numerical experiments, our~main motivation was the~article~\cite{bb2}, which revealed the~need for quaternion matrices satisfying the~RIP. In our research we got interested in the~articles~\cite{jameslee,james1,james2}, the~authors of which claim they had found a~simpler proof of the~RIP for real and complex Gaussian random matrices. This brought us to study the~ratio random variable, so-called \textit{Rayleigh quotient}
$$ \mathcal{R}=\frac{\norm{\Fi\x}_2^2}{\norm{\x}_2^2} $$
for $\Fi\in\H^{m\times n}$ and non-zero, $s$-sparse vector $\x\in\H^n$, and eventually finding its distribution, i.e. $\Gamma(2m,2m)$, which does not depend on~$\x$, (Lemma~\ref{Rdistribution}). Its mean (expectation) equals~$1$ and its variance equals~$\frac{1}{2m}$.  Recall that in the~real case $\mathcal{R}$~has distribution $\Gamma\left(\frac{m}{2},\frac{m}{2}\right)$ with the~same mean but four times bigger variance~$\frac{2}{m}$~\cite{jameslee}. It clearly explains higher rate of correct sparse signals reconstructions in the~quaternion case in compare with the~real case of the~same size (see~\cite[Section 6]{bb2}).

Having proved Lemma~\ref{Rdistribution}, we were initially hoping to mimic the~approach from~\cite{jameslee,james1,james2} to obtain the~proof of the~RIP for quaternion Gaussian matrices, however, in our humble opinion it contains certain inaccuracies, which we want to briefly point out before proceeding further (we refer to~\cite{jameslee} for more details). The~authors call a~vector~$\x$ $s$-sparse if it has \textit{exactly} $s$ non-zero coordinates and introduce left and right $s$-\textit{restricted isometry constants} (RIC) as
\begin{equation}\label{RICs}
	 \delta_s^L=1-\min_{\#\supp\,\x=s}\frac{\norm{\Fi\x}_2^2}{\norm{\x}_2^2}, \qquad \delta_s^R=\max_{\#\supp\,\x=s}\frac{\norm{\Fi\x}_2^2}{\norm{\x}_2^2}-1,
\end{equation}
where $\#\supp\,\x$ denotes the~cardinality of the~support set of~$\x$ (i.e. the~number of its nonzero coordinates).
Note that it differs substantially from our approach from Definition~\ref{qRIPdef} ($\max(\delta_s^L,\delta_s^R)$ might be smaller than~$\delta_s$ and we loose monotonicity of~$\delta_s$ with respect to~$s$). Moreover, the~set of vectors with a~fixed cardinality of the~support does not have the~structure of a~module over the~ring~$\H$ anymore, hence the~above variables may not be well-defined (one might need to use $\inf/\sup$ instead).

After noticing that all ratio random variables $\frac{\norm{\Fi\x}_2^2}{\norm{\x}_2^2}$ have the~same distribution, which does not depend on the~choice of~$\x$, the~aforementioned authors only consider one vector~$\xS$ with $\supp\,\xS=S$ for every support set $S$ with $\#S=s$ and define the~following variable, called \textit{restricted isometry random variable} (RIV)
$$ \Delta_s^R=\max_{S:\#S=s}\left\{\frac{\norm{\Fi\xS}_2^2}{\norm{\xS}_2^2} \;\text{ for some }\xS \text{ with } \supp\,\xS=S \right\}-1 $$
($\Delta_s^L$ analogously), that is omitting $\max/\min$ over uncountable sets of vectors supported on appropriate~sets~$S$. That is true that for a~fixed vector and full ensemble of Gaussian matrices all Rayleigh quotients~$\mathcal{R}$ have the~same distribution, yet the~vector that realizes the~$\max/\min$ on a~fixed support set~$S$ depends on the~realization matrix in general, therefore the~distribution of $\max\left\{\frac{\norm{\Fi\x}_2^2}{\norm{\x}_2^2}:\supp\x=S\right\}$ very likely differs from the~distribution of a~single $\mathcal{R}$ for a~fixed vector. Therefore, one cannot draw any conclusions concerning~$\delta_s^L$, $\delta_s^R$ or~$\delta_s$ based on the~upper estimates of $\Delta_s^L$, $\Delta_s^R$ since obviously
$$ \Delta_s^L\leq\delta_s^L\leq\delta_s \quad\text{and}\quad \Delta_s^R\leq\delta_s^R\leq\delta_s. $$
Empirical distributions of~RICs and RIVs are presented in section~\ref{numexp} in~Fig.~\ref{fig:5_1} and~\ref{fig:5_2}.

Finally, the~authors of~\cite{jameslee} claim to have derived the~\textit{precise distribution functions} of $\Delta_s^R$ and $\Delta_s^L$ using, what they call, an '\textit{i.i.d. representation}', even though they admit that the~Rayleigh quotients for different supports are not independent if the~support sets intersect. In fact, they suggest a~lower estimate of the~cumulative distribution functions (CDF) of the~RIVs by the~CDF of a~maximum of ${n \choose s}$ (the~number of support sets) i.i.d. variables of the~same distribution as~$\mathcal{R}$ \cite[Appendix~E, Step~1]{jameslee}, however, their proof is very unsatisfactory due to the~lack of theoretical justification for the~presented inequalities. For the~case of two random variables what they really use is so-called \textit{positive dependence}, proved for certain distributions (including Gamma and Chi-square) in more general case, i.e.
$$ \Prob(X_1\in A, X_2\in A)\geq \Prob(X_1\in A)\cdot\Prob(X_2\in A) $$
for all measurable sets~$A\subset\R$ (see e.g. \cite{jensen}). For the~bigger number of variables with Gamma distribution this result was extended to a~certain extent \cite{royen} but -- to our best knowledge -- there is no proof for the~general case.

To sum up, it might be somewhat interesting to study the~RIVs in the~compressed sensing analysis, however, it brings no information on the~upper estimate of the~restricted isometry constants. Therefore, this approach is not useful for attempts of proving the~RIP. In fact, the~eigenvalue approach, i.e. analysis of moduli of eigenvalues of Hermitian matrices $\herm{\Fi_S}\Fi_S$, criticized by the~authors of~\cite{jameslee}, is the~right direction -- it is well known that the~restricted isometry constants can be expressed in terms of these eigenvalues (also in the~quaternion case -- cf.~{\cite[Lemma~2.1]{bb2}}). Currently however, we do not know distribution of the~eigenvalues of~$\herm{\Fi_S}\Fi_S$ in the~quaternion case. 

\medskip
That is why our proof of the~RIP of quaternion Gaussian random matrices refers to the~classical approach for real sub-Gaussian random matrices~{\cite[section~9.1]{introCS}}, which uses the~notion of sub-exponential (locally sub-Gaussian) random variables and a~$\gamma$-covering of a~unit ball. We are aware that our result (Theorem~\ref{GaussQRIP}) is not optimal, and in the~current form only applies to the~case of big~$n$ and very small~$s$, but it is a~first step towards further thorough analysis of the~quaternion random matrices and their applications in compressed sensing.

\medskip
The~article is organized as follows. First, we briefly recall basic facts about the~quaternion algebra and quaternion matrices. Third section is devoted to quaternion random variables and matrices. We define the~quaternion Gaussian random variable with mean zero and variance~$\sigma^2$, denoted by $X\sim\Normal_{\H}\left(0,\sigma^2\right)$, where in particular we always assume independence of its components -- note that this aspect was not clear in~\cite{l1minqs}. We also provide distribution of the~Rayleigh quotient~$\mathcal{R}$ for quaternion Gaussian random matrices and justify that it is sub-exponential with appropriate parameters. In the~fourth section we prove the~main result of the~article, i.e. the~RIP for quaternion Gaussian random matrices. Finally, we present outcomes of numerical simulations illustrating the~theoretical considerations and we conclude with a~short r\'esum\'e of the~obtained results and our further research perspectives.

\section{Quaternions}

Denote by~$\H$ the~algebra of (\textit{Hamilton} or \textit{real}) quaternions
$$ q=a+b\i+c\j+d\k, \quad \textrm{where}\quad a,b,c,d\in\R $$
endowed with the~standard norm
$$ |q|=\sqrt{q\conj{q}}=\sqrt{a^2+b^2+c^2+d^2}, $$
where $\conj{q}=a-b\i-c\j-d\k$ is the~conjugate of~$q$. Recall that multiplication is associative but in general not commutative in the~quaternion algebra and is defined by the~following rules
$$ \i^2=\j^2=\k^2=\i\j\k=-1 \quad\text{and}\quad \i\j=-\j\i=\k, \quad \j\k=-\k\j=\i, \quad \k\i=-\i\k=\j. $$
Multiplication is distributive with respect to addition and has a~neutral element $1\in\H$, hence $\H$ forms a~ring, which is usually called a~noncommutative field.

In this article we consider vectors with quaternion coordinates, i.e. elements of $\H^n$. Algebraically $\H^n$ is a~module over the~ring~$\H$, usually called the~\textit{quaternion vector space} (although it is not a~vector space, since $\H$ is not a~field). For any $n\in\N$ we consider the~Hermitian form $\ip{\cdot,\cdot}\colon\H^n\times\H^n\to\H$ with quaternion values
$$ \ip{\x,\y}=\herm{\y}\x=\sum_{i=1}^{n}\conj{y_i}x_i, \quad\textrm{for}\quad \x=(x_1,\ldots,x_n)^T,\; \y=(y_1,\ldots,y_n)^T\in\H^n, $$
where $\herm{\y}=\conj{\y}^T$, which satisfies axioms of an~inner product (in terms of the~right quaternion vector space, i.e. considering the~right scalar multiplication) and the~Cauchy-Schwarz inequality with respect to the~norm
$$ \norm{\x}_2=\sqrt{\ip{\x,\x}}=\sqrt{\sum_{i=1}^{n}|x_i|^2}, \quad \textrm{for any }\x=(x_1,\ldots,x_n)^T\in\H^n. $$
See {\cite[Section~2]{bb2}} for the~details.

We are particularly interested in matrices with quaternion entries with usual multiplication rules.
A~matrix $\Fi\in\H^{m\times n}$ defines a~$\H$-linear transformation $\Fi:\H^n\to\H^m$ (in~terms of the~right quaternion vector space) which acts by the~standard matrix-vector multiplication. By $\herm{\Fi}$ we denote the~adjoint matrix, i.e. $\herm{\Fi}=\conj{\Fi}^T$, which defines the~adjoint $\H$-linear transformation, i.e.
$$ \ip{\x,\herm{\Fi}\y}=\herm{\left(\herm{\Fi}\y\right)}\x=\herm{\y}\Fi\x=\ip{\Fi\x,\y} \quad \textrm{for} \quad \x\in\H^n,\; \y\in\H^m. $$
Recall also that a~linear transformation (matrix) $\mathbf{\Psi}\in\H^{n\times n}$ is called \textit{Hermitian} if $\herm{\mathbf{\Psi}}=\mathbf{\Psi}$. Obviously, $\herm{\Fi}\Fi$ is Hermitian for any $\Fi\in\H^{m\times n}$. 

Below we recall certain property of quaternion Hermitian matrices {\cite[Lemma~2.1]{bb2}}, which will be used later on.
\begin{lemma}\label{herm-norm}
Suppose $\mathbf{\Psi}\in\H^{n\times n}$ is Hermitian. Then
$$ \norm{\mathbf{\Psi}}_{2\to2}=\max_{\x\in\H^n,\norm{\x}_2=1}\left|\ip{\mathbf{\Psi}\x,\x}\right|=\max_{\x\in\H^n\setminus\{\0\}}\frac{\left|\ip{\mathbf{\Psi}\x,\x}\right|}{\norm{\x}_2^2}, $$
where $\norm{\cdot}_{2\to2}$ is the~standard operator norm in the~right quaternion vector space $\H^n$ endowed with the~norm $\norm{\cdot}_2$, i.e.
$$ \norm{\mathbf{\Psi}}_{2\to2}=\max_{\x\in\H^n\setminus\{\0\}}\frac{\norm{\mathbf{\Psi}\x}_2}{\norm{\x}_2}= \max_{\x\in\H^n,\norm{\x}_2=1}\norm{\mathbf{\Psi}\x}_2. $$
\end{lemma}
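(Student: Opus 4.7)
The plan is to follow the classical operator-theoretic argument relating the operator norm of a Hermitian matrix to the supremum of its associated quadratic form, carefully adapted to the noncommutative quaternion setting. The second equality is immediate by real homogeneity of both numerator and denominator under scaling of~$\x$, so the real content is the first equality $\norm{\mathbf{\Psi}}_{2\to2}=\max_{\norm{\x}_2=1}\abs{\ip{\mathbf{\Psi}\x,\x}}$. Compactness of the unit sphere in~$\H^n$ together with continuity of the relevant functionals will let me write $\max$ in place of $\sup$ throughout.

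Set $M:=\max_{\norm{\x}_2=1}\abs{\ip{\mathbf{\Psi}\x,\x}}$. The bound $M\leq\norm{\mathbf{\Psi}}_{2\to2}$ is a one-line consequence of the Cauchy--Schwarz inequality for the Hermitian form $\ip{\cdot,\cdot}$, namely $\abs{\ip{\mathbf{\Psi}\x,\x}}\leq\norm{\mathbf{\Psi}\x}_2\norm{\x}_2\leq\norm{\mathbf{\Psi}}_{2\to2}\norm{\x}_2^2$. The substantive direction is the reverse inequality, which I would obtain from a polarization-type identity.

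First I would verify two preparatory facts. Since $\mathbf{\Psi}=\herm{\mathbf{\Psi}}$, applying quaternion conjugation to the scalar $\herm{\x}\mathbf{\Psi}\x$ and using $\conj{pqr}=\conj{r}\,\conj{q}\,\conj{p}$ gives $\herm{\x}\herm{\mathbf{\Psi}}\x=\herm{\x}\mathbf{\Psi}\x$, so $\ip{\mathbf{\Psi}\x,\x}\in\R$ for every~$\x$. The same computation applied to $\herm{\y}\mathbf{\Psi}\x$ yields $\ip{\mathbf{\Psi}\y,\x}=\conj{\ip{\mathbf{\Psi}\x,\y}}$, and hence $\ip{\mathbf{\Psi}\x,\y}+\ip{\mathbf{\Psi}\y,\x}=2\operatorname{Re}\ip{\mathbf{\Psi}\x,\y}$. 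Expanding $\ip{\mathbf{\Psi}(\x\pm\y),\x\pm\y}$ and subtracting then produces
$$ 4\operatorname{Re}\ip{\mathbf{\Psi}\x,\y}=\ip{\mathbf{\Psi}(\x+\y),\x+\y}-\ip{\mathbf{\Psi}(\x-\y),\x-\y}. $$
Bounding each term on the right by $M\norm{\cdot}_2^2$ and invoking the parallelogram law $\norm{\x+\y}_2^2+\norm{\x-\y}_2^2=2\norm{\x}_2^2+2\norm{\y}_2^2$ delivers $\abs{\operatorname{Re}\ip{\mathbf{\Psi}\x,\y}}\leq\frac{M}{2}(\norm{\x}_2^2+\norm{\y}_2^2)$.

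The concluding step, and the one most sensitive to noncommutativity, is to remove the real part. Given $\x$ with $\mathbf{\Psi}\x\neq\0$ I would set $\y:=\mathbf{\Psi}\x/\norm{\mathbf{\Psi}\x}_2$; because the scaling factor is a positive real scalar it commutes freely with quaternion multiplication, and $\ip{\mathbf{\Psi}\x,\y}=\herm{\y}\mathbf{\Psi}\x=\norm{\mathbf{\Psi}\x}_2\in\R$. For $\x$ of unit norm this $\y$ is also a unit vector, and the previous bound collapses to $\norm{\mathbf{\Psi}\x}_2\leq M$, whence $\norm{\mathbf{\Psi}}_{2\to2}\leq M$. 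I expect the only delicate point to be precisely this bookkeeping: ensuring that the reality of $\ip{\mathbf{\Psi}\z,\z}$ and the conjugation relation between $\ip{\mathbf{\Psi}\x,\y}$ and $\ip{\mathbf{\Psi}\y,\x}$ are carried correctly through the quaternion Hermitian form, where the inner product is not symmetric and scalars do not commute, so that I may legitimately rescale $\y$ on the appropriate side without disturbing the identities above.
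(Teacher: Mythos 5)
Your argument is correct. Note, however, that this paper does not actually prove the lemma --- it is recalled from \cite[Lemma~2.1]{bb2} and used as a black box --- so there is no in-paper proof to compare against. What you give is the standard polarization argument for Hermitian operators, transplanted to the right quaternion module, and you have handled the two points where noncommutativity could bite: the reality of $\ip{\mathbf{\Psi}\x,\x}$ and the conjugation relation between the two cross terms both follow from applying the quaternion conjugation anti-automorphism to $\herm{\x}\mathbf{\Psi}\x$ and $\herm{\y}\mathbf{\Psi}\x$, and your choice $\y=\mathbf{\Psi}\x/\norm{\mathbf{\Psi}\x}_2$ only involves a positive real scalar, which is central in $\H$, so $\ip{\mathbf{\Psi}\x,\y}=\norm{\mathbf{\Psi}\x}_2$ is legitimate. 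Two small points you should make explicit rather than leave implicit: first, when you bound the two terms of the polarization identity by $M\norm{\x\pm\y}_2^2$ you are using the inequality $\abs{\ip{\mathbf{\Psi}\z,\z}}\leq M\norm{\z}_2^2$ for \emph{arbitrary} $\z$, which requires the observation that $\ip{\mathbf{\Psi}(\z\lambda),\z\lambda}=\lambda^2\ip{\mathbf{\Psi}\z,\z}$ for real $\lambda$ (you invoke real homogeneity only for the second displayed equality of the lemma); second, the parallelogram law needs the one-line verification that $\norm{\x\pm\y}_2^2=\norm{\x}_2^2\pm(\ip{\x,\y}+\ip{\y,\x})+\norm{\y}_2^2$ with the cross terms summing to a real number, which holds for this Hermitian form. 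With those details written out the proof is complete and self-contained, which is arguably an improvement over the paper's mere citation.
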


\section{Quaternion Gaussian random matrices}

For a~real random variable~$X$ we will denote its expectation (mean) by~$\Mean X$ and its variance by~$\Var X$. For Gamma distribution $\Gamma(\alpha,\beta)$ with shape parameter~$\alpha>0$ and rate parameter~$\beta>0$, i.e. random variable~$X$ with the~probability density function
$$ \gamma_{\alpha,\beta}(x) = \frac{\beta^{\alpha}}{\Gamma(\alpha)}x^{\alpha-1} e^{-\beta x}\quad \text{for }\,x\in(0,+\infty), $$
we have that
$$ \Mean X=\frac{\alpha}{\beta} \quad\text{and}\quad \Var X=\frac{\alpha}{\beta^2}.$$
Recall also that a~sum of squares of $k$ independent standard Gaussian random variables $\Normal(0,1)$ has Chi-square distribution with $k$ degrees of freedom, i.e. $\chi^2(k)=\Gamma\left(\frac{k}{2},\frac{1}{2}\right)$.

\medskip
Quaternion random variables have not been studied so far as thoroughly as their real or complex counterparts. However, during last two decades they attracted the~attention of researches both in theoretical and applied sciences \cite{lb,vc}.
Quaternion random variable $X$ is defined by four real random variables
$$ X = X_0 + X_1\i + X_2\j + X_3\k $$
and as such can be associated with the~four-dimensional real random vector $\left(X_1,X_2,X_3,X_4\right)$. There are several definitions of a~quaternion Gaussian random variable~\cite{vc}. The~most general (so-called \textit{R-Gaussian}) calls the~quaternion variable~$X$ Gaussian if $\left(X_1,X_2,X_3,X_4\right)$ is a~Gaussian random vector in~$\R^4$.

In this article we will only consider \textit{quaternion R-Gaussian random variables with independent components}. More precisely, we will assume that 
$$ X_i\sim\Normal\left(0,\frac{\sigma^2}{4}\right), \; i=1,\ldots,4, \quad\text{and}\quad X_i \text{ are pairwise independent.}$$
Such variables $X = X_0 + X_1\i + X_2\j + X_3\k$ will be called Gaussian with mean zero and variance~$\sigma^2$ and denoted by $X\sim\Normal_{\H}\left(0,\sigma^2\right)$. 

In what follows we will consider quaternion random matrices with independent entries sampled from quaternion Gaussian distribution, which has been defined above.
Let us emphasize once again that we will always assume \textit{independence of components} of quaternion Gaussian random variables.

\begin{lemma}\label{Rdistribution}
Let $\Fi=\left(\fii_{ij}\right)\in\H^{m\times n}$ be a~quaternion Gaussian random matrix whose entries are independent random variables with the~distribution $\Normal_{\H}\left(0,\frac{1}{m}\right)$ and let $0\neq\x\in\H^n$. Then the~random variable
$$ \mathcal{R}=\frac{\norm{\Fi\x}_2^2}{\norm{\x}_2^2} $$
has Gamma distribution $\Gamma(2m,2m)$ and it does not depend on~$\x$. In particular, $\Mean\mathcal{R}=1$ and $\Var\mathcal{R}=\frac{1}{2m}$.
\end{lemma}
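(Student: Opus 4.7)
The strategy is to reduce $\mathcal{R}$ to a rescaled chi-square variable by carefully tracking how quaternion multiplication by a fixed quaternion acts on the real components of a quaternion Gaussian.

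First, I would write out $(\Fi\x)_i=\sum_{j=1}^n\fii_{ij}x_j$ for each row $i$. For a fixed $y\in\H$, the map $q\mapsto qy$ is $\R$-linear on $\H\cong\R^4$, so it is represented by some real $4\times 4$ matrix $M_y$ acting on the column vector of real components of $q$. The key observation is that $M_y^T M_y=|y|^2\I_4$: this follows without any computation from the multiplicativity of the quaternion norm, $|qy|^2=|q|^2|y|^2$, applied for all $q\in\H$. Consequently, when $y\neq 0$, $M_y/|y|$ is orthogonal, and therefore preserves the joint distribution of an i.i.d.\ Gaussian $4$-vector up to the scale factor $|y|$.

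Second, since the real components of $\fii_{ij}$ are i.i.d.\ $\Normal(0,\frac{1}{4m})$, step one implies that the real components of $\fii_{ij}x_j$ are i.i.d.\ $\Normal(0,\frac{|x_j|^2}{4m})$, i.e.\ $\fii_{ij}x_j\sim\Normal_{\H}\left(0,\frac{|x_j|^2}{m}\right)$ with independent components. Because the $\fii_{ij}$ are independent across $j$, summing over $j$ yields real components that are i.i.d.\ $\Normal\left(0,\frac{\sum_j|x_j|^2}{4m}\right)=\Normal\left(0,\frac{\norm{\x}_2^2}{4m}\right)$; in other words $(\Fi\x)_i\sim\Normal_{\H}\left(0,\frac{\norm{\x}_2^2}{m}\right)$. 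The rows being independent of one another, the $m$ variables $(\Fi\x)_1,\ldots,(\Fi\x)_m$ are mutually independent.

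Third, $\norm{\Fi\x}_2^2=\sum_{i=1}^m|(\Fi\x)_i|^2$ is then a sum of $4m$ independent squared Gaussians each with variance $\norm{\x}_2^2/(4m)$. Therefore $\tfrac{4m}{\norm{\x}_2^2}\norm{\Fi\x}_2^2\sim\chi^2(4m)=\Gamma(2m,\tfrac12)$, and rescaling by $\tfrac{1}{4m}$ gives $\mathcal{R}\sim\Gamma(2m,2m)$, whose law depends on $\x$ only through the already cancelled $\norm{\x}_2^2$. The stated mean and variance follow from $\Mean X=\alpha/\beta$ and $\Var X=\alpha/\beta^2$ for $X\sim\Gamma(\alpha,\beta)$.

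The only real obstacle is the opening identity $M_y^T M_y=|y|^2\I_4$, and even this is essentially a restatement of $|qy|=|q|\,|y|$; once it is in place, the independence of components of each $(\Fi\x)_i$ (rather than just their joint Gaussianity) is the ingredient that makes the chi-square count come out to $4m$, and the rest of the argument is a direct assembly of standard Gaussian facts.
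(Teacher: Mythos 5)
Your proof is correct, and it reaches the same intermediate fact as the paper --- that the four real components of each entry of $\Fi\x$ are i.i.d.\ $\Normal\bigl(0,\tfrac{\norm{\x}_2^2}{4m}\bigr)$, after which both arguments finish identically by counting $4m$ independent squared standard Gaussians to get $\chi^2(4m)=\Gamma(2m,\tfrac12)$ and rescaling to $\Gamma(2m,2m)$. The difference is in how that intermediate fact is established. The paper expands the quaternion product $\fii_{k\ell}x_\ell$ explicitly into its four real components, computes each variance term by term, and then checks that all pairwise covariances such as $\Cov(y_{\mathbf r,k},y_{\i,k})$ vanish, invoking the equivalence of zero covariance and independence for jointly Gaussian variables. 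You replace this entire computation with the single structural observation that right multiplication by a fixed quaternion $y$ is represented on $\H\cong\R^4$ by a matrix $M_y$ with $M_y^TM_y=|y|^2\I_4$ (which follows from $|qy|=|q|\,|y|$ by polarization, since a symmetric matrix is determined by its quadratic form), so that $M_y/|y|$ is orthogonal and therefore maps an isotropic Gaussian $4$-vector to an isotropic Gaussian $4$-vector scaled by $|y|$. This is cleaner and less error-prone than the paper's componentwise bookkeeping, and it isolates the one genuinely quaternionic ingredient (multiplicativity of the norm) from the routine Gaussian manipulations; the paper's explicit expansion, on the other hand, displays the real $4\times4$ block structure of quaternion matrix multiplication, which is reused conceptually elsewhere. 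Both are complete proofs.
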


\begin{proof}
Since $ \frac{\Fi\x}{\norm{\x}_2}=\Fi\left(\frac{\x}{\norm{\x}_2^2}\right)$, without loss of generality we can assume that $\norm{\x}_2=1$ and hence $\mathcal{R}=\norm{\Fi\x}_2^2$. Let us decompose the~matrix~$\Fi$ into its components:
\begin{align*}
	\Fi = \Fi_\mathbf{r} + \Fi_\i \i + \Fi_\j \j + \Fi_\k \k,\quad\text{where}\quad \Fi_{\mathbf{r}},\Fi_{\i},\Fi_{\j},\Fi_{\k}\in\R^{m\times n},
\end{align*} and analogously every matrix entry can be written as
\begin{align*}
	\fii_{ij} = \fii_{\mathbf{r},ij} + \fii_{\i,ij} \i + \fii_{\j,ij} \j + \fii_{\k,ij} \k \quad\text{with}\quad \fii_{e,ij}\in\R \quad\text{for}\quad e\in\{\mathbf{r},\i,\j,\k\}. 
\end{align*} 
We will use the~same indices to denote components of the~vector~$\x=(x_1,\ldots,x_n)^T$.

\medskip
Let $\Fi\x=\y=(y_1,\ldots,y_m)^T$, then
\begin{align*}
	y_k &= \sum\limits_{\ell=1}^n \fii_{k\ell} x_\ell
	=\sum\limits_{\ell=1}^n (\fii_{\mathbf{r},k\ell} + \fii_{\i,k\ell} \i + \fii_{\j,k\ell} \j + \fii_{\k,k\ell} \k)\cdot(x_{\mathbf{r},\ell} + x_{\i,\ell}\i + x_{\j,\ell}\j + x_{\k,\ell}\k) \displaybreak[1]\\
	&= \sum\limits_{\ell=1}^n (\fii_{\mathbf{r},k\ell}x_{\mathbf{r},\ell} - \fii_{\i,k\ell} x_{\i,\ell} - \fii_{\j,k\ell} x_{\j,\ell} - \fii_{\k,k\ell} x_{\k,\ell}) \displaybreak[3] \\
	&\quad + \sum\limits_{\ell=1}^n (\fii_{\i,k\ell}x_{\mathbf{r},\ell} + \fii_{\mathbf{r},k\ell} x_{\i,\ell} - \fii_{\k,k\ell} x_{\j,\ell} + \fii_{\j,k\ell} x_{\k,\ell})\i \displaybreak[3] \\
	&\quad + \sum\limits_{\ell=1}^n (\fii_{\j,k\ell}x_{\mathbf{r},\ell} + \fii_{\k,k\ell} x_{\i,\ell} + \fii_{\mathbf{r},k\ell} x_{\j,\ell} - \fii_{\i,k\ell} x_{\k,\ell})\j \displaybreak[3] \\
	&\quad + \sum\limits_{\ell=1}^n (\fii_{\k,k\ell}x_{\mathbf{r},\ell} - \fii_{\j,k\ell} x_{\i,\ell} + \fii_{\i,k\ell} x_{\j,\ell} + \fii_{\mathbf{r},k\ell} x_{\k,\ell})\k \displaybreak[3] \\
	&=: y_{\mathbf{r},k} + y_{\i,k}\i + y_{\j,k}\j + y_{\k,k}\k.
\end{align*} 
Recall that
$$ \fii_{e,ij}\sim\Normal\left(0,\frac{1}{4m}\right) \quad\text{for}\quad e\in\{\mathbf{r},\i,\j,\k\}, \quad i\in\{1,\ldots,m\}, \quad j\in\{1,\ldots,n\}. $$
Hence for each $e\in\{\mathbf{r},\i,\j,\k\}$ and every $k\in\{1,\ldots,m\}$, random variables $y_{e,k}$ are Gaussian (as linear combinations of Gaussian random variables) with $\Mean y_{e,k}=0$ and $\Var y_{e,k}=\frac{1}{4m}$ -- since all $\fii_{e,k\ell}$ are independent and
\begin{align*}
	\Var y_{\mathbf{r},k} &= \sum\limits_{\ell=1}^n (x_{\mathbf{r},\ell}^2\Var\fii_{\mathbf{r},k\ell} + x_{\i,\ell}^2\Var\fii_{\i,k\ell} + x_{\j,\ell}^2\Var\fii_{\j,k\ell}+ x_{\k,\ell}^2\Var\fii_{\k,k\ell}) \\
	&= \frac{1}{4m}\sum\limits_{\ell=1}^n (x_{\mathbf{r},\ell}^2 + x_{\i,\ell}^2 + x_{\j,\ell}^2 + x_{\k,\ell}^2) = \frac{\norm{\x}_2^2}{4m} = \frac{1}{4m}.
\end{align*}
For the~remaining components analogously.

Independence of the~variables $\fii_{e,k\ell}$ implies also independence of $y_{e,k}$ and $y_{e,\ell}$ for every fixed $e\in\{\mathbf{r},\i,\j,\k\}$ and all pairs $k,\ell\in\{1,\ldots,m\}$, $k\neq\ell$. 
In order to verify independence of $y_{\mathbf{r},k}$ and $y_{\i,k}$, $k\in\{1,\ldots,m\}$, observe that
\begin{align*}
	\Cov(y_{\mathbf{r},k},y_{\i,k}) &= \Mean (y_{\mathbf{r},k}\cdot y_{\i,k}) - \Mean y_{\mathbf{r},k}\cdot \Mean y_{\i,k} \\
	&= \Mean\left[\left(\sum\limits_{\ell=1}^n (\fii_{\mathbf{r},k\ell}x_{\mathbf{r},\ell} - \fii_{\i,k\ell} x_{\i,\ell} - \fii_{\j,k\ell} x_{\j,\ell} - \fii_{\k,k\ell} x_{\k,\ell})\right)\right.\\
	&\qquad\quad \left.\cdot\left(\sum\limits_{p=1}^n (\fii_{\i,kp}x_{\mathbf{r},p} + \fii_{\mathbf{r},kp} x_{\i,p} - \fii_{\k,kp} x_{\j,p} + \fii_{\j,kp} x_{\k,p})\right)\right] \\
	&= \sum\limits_{\ell=1}^n \Mean\big((\fii_{\mathbf{r},k\ell}x_{\mathbf{r},\ell} - \fii_{\i,k\ell} x_{\i,\ell} - \fii_{\j,k\ell} x_{\j,\ell} - \fii_{\k,k\ell} x_{\k,\ell}) \\
	&\qquad\quad \cdot(\fii_{\i,k\ell}x_{\mathbf{r},\ell} + \fii_{\mathbf{r},k\ell} x_{\i,\ell} - \fii_{\k,k\ell} x_{\j,\ell} + \fii_{\j,k\ell} x_{\k,\ell})\big) \\
	&= \sum\limits_{\ell=1}^n (x_{\mathbf{r},\ell}x_{\i,\ell}\Mean\fii_{\mathbf{r},k\ell}^2 - x_{\i,\ell}x_{\mathbf{r},\ell}\Mean\fii_{\i,k\ell}^2 - x_{\j,\ell}x_{\k,\ell}\Mean\fii_{\j,k\ell}^2 + 	
		x_{\k,\ell}x_{\j,\ell}\Mean\fii_{\k,k\ell}^2 ) \\
	&= \frac{1}{4m} \sum\limits_{\ell=1}^n (x_{\mathbf{r},\ell}x_{\i,\ell} - x_{\i,\ell}x_{\mathbf{r},\ell}  - x_{\j,\ell}x_{\k,\ell}  + x_{\k,\ell}x_{\j,\ell} ) = 0,
\end{align*} 
since $\Mean\fii_{e,k\ell}^2=\Var\fii_{e,k\ell} = \frac{1}{4m}$. The~same way one argues that covariance of the~remaining pairs is also zero. Recall that real Gaussian random vectors have independent components if and only if their covariance equals zero. 

We get, therefore, that all $y_{e,k}$, $e\in\{\mathbf{r},\i,\j,\k\}$ and $k\in\{1,\ldots,m\}$, are independent (real) random variables with distribution $\Normal\left(0,\frac{1}{4m}\right)$ and hence $\sqrt{4m}\,y_{e,k}\sim\Normal(0,1)$. This implies that
\begin{align*}
	4m\mathcal{R} = \norm{\sqrt{4m}\y}_2^2 = \sum\limits_{\ell=1}^m \left(\left(\sqrt{4m}y_{\mathbf{r},\ell}\right)^2 + \left(\sqrt{4m}y_{\i,\ell}\right)^2 + \left(\sqrt{4m}y_{\j,\ell}\right)^2 + 		
									\left(\sqrt{4m}y_{\k,\ell}\right)^2\right)
\end{align*} 
is a~sum of $4m$ squares of independent standard Gaussian random variables $\Normal(0,1)$ and consequently, $4m\mathcal{R}$ has Chi-square distribution $\chi^2(4m)=\Gamma(2m,\frac{1}{2})$. We conclude that $\mathcal{R}$~has distribution $\Gamma(2m,2m)$, independently of~$\x$. This random variable has mean $\frac{2m}{2m}=1$ and variance $\frac{2m}{(2m)^2}=\frac{1}{2m}$.
\end{proof}

As we previously remarked, in the~real case $\mathcal{R}$ has distribution~$\Gamma\left(\frac{m}{2},\frac{m}{2}\right)$, that is with four times bigger variance~\cite{jameslee}. It explains the~aforementioned better results of quaternion sparse vectors reconstruction in compare with the~real case~{\cite[Fig.~1 and Fig.~2~(a)]{bb2}}, since a~quaternion Gaussian random matrix statistically has smaller restricted isometry constant than its real counterpart.

\medskip
Let us now proceed with the~tools needed for the~proof of the~RIP. Recall that a~real random variable $X$ is called \textit{sub-exponential} (\textit{locally sub-Gaussian})~\cite{Wainwright} if there exist $\sigma^2>0$ and $\delta>0$ such that
\begin{align*}
	\Mean\big(\mathrm{e}^{t(X-\Mean X)}\big) \leq \exp\left(\frac{\sigma^2 t^2}{2}\right)\quad \text{for }\abs{t}\leq\delta.
\end{align*} 
We will denote it $X\sim\mathrm{SubExp}(\sigma^2,\delta)$. Equivalently, one may write that 
$$ M(t)=\Mean\left(\mathrm{e}^{tX}\right) \leq \exp\left(t\,\Mean X+\frac{\sigma^2 t^2}{2}\right)\quad \text{for }\abs{t}\leq\delta,$$
where $M(t)$ is the~moment generating function. 

\medskip
It is known that a~random variable with Gamma distribution is sub-exponential with any $\sigma^2>\Var X$ for some $\delta>0$~\cite{Wainwright}. Below we recall a~simple proof of this fact, with $\sigma^2$ and $\delta$ chosen for our purposes, in which we shall use the~following form of the~moment generating function of the~Gamma distribution $\Gamma(\alpha,\beta)$
$$ M(t)=\frac{1}{\left(1-\frac{t}{\beta}\right)^{\alpha}}\quad\text{for}\; t<\beta. $$

\begin{lemma}\label{GammaSubExp}
Let $X$ has distribution $\Gamma(\alpha,\beta)$. Then $X\sim\mathrm{SubExp}\left(\frac{5}{2}\Var X,\frac{\beta}{5}\right)$.
\end{lemma}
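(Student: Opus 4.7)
The plan is to compute the moment generating function of the centered variable $X-\Mean X$, take its logarithm, and bound the resulting function via its Taylor expansion on the symmetric interval dictated by $\delta=\beta/5$.

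First, using the formula for the moment generating function of $\Gamma(\alpha,\beta)$ given just above the lemma and the fact that $\Mean X=\alpha/\beta$, I would write, for $t<\beta$,
$$\ln\Mean\bigl(\mathrm{e}^{t(X-\Mean X)}\bigr)= -\tfrac{t\alpha}{\beta}-\alpha\ln\!\left(1-\tfrac{t}{\beta}\right)=\alpha\,f(u),\qquad u:=\tfrac{t}{\beta},$$
where $f(u)=-u-\ln(1-u)$. The sub-exponential inequality to be established, written in terms of $u$, becomes
$$\alpha\,f(u)\leq \tfrac{1}{2}\cdot\tfrac{5}{2}\Var X\cdot t^{2}=\tfrac{5\alpha}{4}\,u^{2}\qquad \text{for }|u|\leq \tfrac{1}{5},$$
since $\Var X=\alpha/\beta^{2}$. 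Thus the entire problem is reduced to the purely analytic inequality $f(u)\leq \tfrac{5}{4}u^{2}$ on $[-\tfrac{1}{5},\tfrac{1}{5}]$.

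Second, I would establish this inequality by Taylor expansion, handling the two signs of $u$ separately, as the series is alternating for $u<0$ but positive for $u>0$. For $u\in[0,\tfrac{1}{5}]$, every coefficient in $f(u)=\sum_{k\geq 2}u^{k}/k$ is positive, and since $1/k\leq 1/2$ for $k\geq 2$,
$$f(u)\leq \tfrac{1}{2}\sum_{k=2}^{\infty}u^{k}=\frac{u^{2}}{2(1-u)}\leq \frac{u^{2}}{2\cdot(4/5)}=\frac{5u^{2}}{8}\leq \frac{5u^{2}}{4}.$$
For $u\in[-\tfrac{1}{5},0]$ I would substitute $v=-u\in[0,\tfrac{1}{5}]$ to obtain $f(u)=v-\ln(1+v)=\sum_{k\geq 2}(-1)^{k}v^{k}/k$, an alternating series with decreasing moduli, whence $f(u)\leq v^{2}/2=u^{2}/2\leq 5u^{2}/4$.

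Combining the two cases gives $\alpha f(u)\leq (5\alpha/4)u^{2}$, which is exactly the sub-exponential bound with parameters $\bigl(\tfrac{5}{2}\Var X,\tfrac{\beta}{5}\bigr)$. There is no real obstacle here: the only delicate point is ensuring that the bound holds for \emph{both} signs of $t$ (the MGF of $X-\Mean X$ being defined on $t<\beta$ is not an issue on the symmetric interval $|t|\leq\beta/5$), and the constant $5$ in the threshold $\delta=\beta/5$ is chosen precisely so that $1/(1-u)\leq 5/4$ absorbs the tail of the series cleanly against the prefactor $5/2$ in front of $\Var X$.
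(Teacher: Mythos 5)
Your proof is correct and follows essentially the same route as the paper: both start from the Gamma moment generating function, center at the mean $\alpha/\beta$, and exploit $1-\tfrac{t}{\beta}\geq\tfrac{4}{5}$ on the interval $|t|\leq\tfrac{\beta}{5}$ to absorb the higher-order terms. The only difference is bookkeeping: the paper argues multiplicatively via the identity $\tfrac{1}{1-u}=1+u+\tfrac{u^2}{1-u}$ followed by $1+x\leq \mathrm{e}^x$, whereas you argue additively on the logarithm by Taylor-expanding $-u-\ln(1-u)$ (even obtaining slightly sharper constants in each sign case).
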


\begin{proof}
Indeed, take $|t|\leq\frac{\beta}{5}$, which equivalently means that $\frac{|t|}{\beta}\leq\frac{1}{5}$. Then
\begin{align*}
	\Mean\big(e^{t(X-\frac{\alpha}{\beta})}\big) &= \frac{1}{\left(1-\frac{t}{\beta}\right)^{\alpha}} \cdot \mathrm{e}^{-t\frac{\alpha}{\beta}}
																								= \left(1+\frac{t}{\beta}+\frac{(\frac{t}{\beta})^2}{1-\frac{t}{\beta}}\right)^{\alpha} \cdot \mathrm{e}^{-t\frac{\alpha}{\beta}} \\
		&\!\!\!\!\!\overset{1-\frac{t}{\beta}\geq\frac{4}{5}}{\leq} \left(1+\frac{t}{\beta}+\frac{5}{4}\left(\frac{t}{\beta}\right)^2\right)^{\alpha} \cdot \mathrm{e}^{-t\frac{\alpha}{\beta}} \\
		&\leq \exp\left(\alpha\cdot\left(\frac{t}{\beta} + \frac{5}{4}\left(\frac{t}{\beta}\right)^2\right)\right) \cdot \exp\left(-t\frac{\alpha}{\beta}\right) \\
		&= \exp\left(\frac{1}{2}\cdot\frac{5}{2}\frac{\alpha}{\beta^2}\cdot t^2\right),
\end{align*} 
where we used well known estimation $1+x\leq\mathrm{e}^x$ for $x\in\R$. Hence $\sigma^2 =\frac{5}{2}\frac{\alpha}{\beta^2}=\frac{5}{2}\Var X$ and $\delta=\frac{\beta}{5}$.
\end{proof}

In what follows we will also use the~following known fact~\cite{Wainwright}: if $X\sim\mathrm{SubExp}(\sigma^2,\delta)$, then
$$ 	\Prob\left(\abs{X-\Mean X}\geq t\right) \leq 2\exp\left(-\frac{t^2}{2\sigma^2}\right)\quad \text{for}\quad 0\leq t\leq \sigma^2\delta. $$

\begin{corollary}
	The random variable $\mathcal{R}\sim\Gamma(2m,2m)$ from Lemma~\ref{Rdistribution} is sub-exponential with parameters $\sigma^2=\frac{5}{2}\cdot\frac{1}{2m}=\frac{5}{4m}$ and $\delta=\frac{2m}{5}$. Hence
$$ \Prob\left(\abs{\mathcal{R}-1}\geq t\right) \leq 2\exp\left(-\frac{t^2}{2\sigma^2}\right)\quad \text{for}\quad 0\leq t\leq \frac{1}{2}, $$
and therefore
\begin{equation}\label{Restimation}
 \forall_{0\neq\x\in\H^n}\quad \forall_{0\leq t\leq \frac{1}{2}}\quad \Prob\left(\abs{\norm{\Fi\x}_2^2-\norm{\x}_2^2}\geq t\norm{\x}_2^2\right) \leq 2\exp\left(-\frac{t^2}{2\sigma^2}\right). 
\end{equation}
\end{corollary}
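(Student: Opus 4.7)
The plan is straightforward since all the essential technical content has already been assembled in Lemma~\ref{Rdistribution} and Lemma~\ref{GammaSubExp} together with the cited tail bound for sub-exponential variables. First I would invoke Lemma~\ref{Rdistribution} to identify the distribution of $\mathcal{R}$ as $\Gamma(2m,2m)$ and record $\Mean\mathcal{R}=1$, $\Var\mathcal{R}=\frac{1}{2m}$. Then I would apply Lemma~\ref{GammaSubExp} with shape $\alpha=2m$ and rate $\beta=2m$, which immediately yields $\mathcal{R}\sim\mathrm{SubExp}\!\left(\frac{5}{2}\cdot\frac{1}{2m},\frac{2m}{5}\right)=\mathrm{SubExp}\!\left(\frac{5}{4m},\frac{2m}{5}\right)$, giving exactly the declared values of $\sigma^2$ and $\delta$.

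Next I would invoke the tail estimate for sub-exponential variables recalled just before the corollary, namely $\Prob(\abs{X-\Mean X}\geq t)\leq 2\exp\!\left(-\frac{t^2}{2\sigma^2}\right)$ valid for $0\leq t\leq\sigma^2\delta$. A short arithmetic check gives $\sigma^2\delta=\frac{5}{4m}\cdot\frac{2m}{5}=\frac{1}{2}$, so the admissible range of $t$ is precisely $[0,\tfrac{1}{2}]$. Together with $\Mean\mathcal{R}=1$ this produces the first displayed inequality of the corollary.

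Finally, to obtain \eqref{Restimation} I would simply rewrite the event $\{\abs{\mathcal{R}-1}\geq t\}$ using the definition $\mathcal{R}=\frac{\norm{\Fi\x}_2^2}{\norm{\x}_2^2}$: multiplying through by $\norm{\x}_2^2>0$ (which is legitimate since $\x\neq\0$) turns it into $\{\abs{\norm{\Fi\x}_2^2-\norm{\x}_2^2}\geq t\norm{\x}_2^2\}$. Because the distribution of $\mathcal{R}$ is independent of~$\x$ by Lemma~\ref{Rdistribution}, the resulting bound holds uniformly over all nonzero $\x\in\H^n$. There is no real obstacle here; the statement is essentially a bookkeeping exercise that specialises the generic Gamma sub-exponentiality lemma to the parameters produced by Lemma~\ref{Rdistribution} and then reinterprets the concentration inequality in the form most convenient for the forthcoming RIP argument.
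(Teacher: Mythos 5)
Your proposal is correct and follows exactly the route the paper intends: specialise Lemma~\ref{GammaSubExp} to $\alpha=\beta=2m$, check $\sigma^2\delta=\frac{5}{4m}\cdot\frac{2m}{5}=\frac{1}{2}$ to get the admissible range of~$t$, and rewrite the event $\{\abs{\mathcal{R}-1}\geq t\}$ by multiplying through by $\norm{\x}_2^2$. The paper leaves this corollary without an explicit proof precisely because it is this bookkeeping exercise, and your write-up fills it in correctly.
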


\section{Proof of the RIP}\label{RIPproof}

As it was stated in the~introduction, we say that a~matrix $\Fi\in\H^{m\times n}$ satisfies the~$s$-restricted isometry property (for quaternion vectors) with a~constant $\delta_s\geq0$ if the~inequalities~\eqref{qRIP} holds for all $s$-sparse quaternion vectors $\x\in\H^n$. The~smallest number $\delta_s\geq0$ with this property is called the $s$-restricted isometry constant. Without loss of generality one can only consider $s$-sparse unit vectors, i.e. $\norm{\x}_2=1$. Moreover, in~{\cite[Lemma~3.2]{bb2}} we proved that the~$s$-restricted isometry constant $\delta_s$ of $\Fi\in\H^{m\times n}$ equivalently equals
\begin{equation}\label{sRIPconstant}
	\delta_s=\max_{S\subset\{1,\ldots,n\},\#S\leq s}\norm{\herm{\Fi_S}\Fi_S-\I}_{2\to2}, 
\end{equation}
where $\Fi_S$ is the~submatrix of~$\Fi$ consisting of columns with indices from $S\subset\{1,\ldots,n\}$.

\medskip
We begin with the~following result in which we fix the~support $S\subset\{1,\ldots,n\}$.
\begin{lemma}\label{suppRIP}
Let $\Fi\in\H^{m\times n}$ be a~quaternion Gaussian matrix whose entries $\fii_{ij}$ are independent quaternion random variables with distribution $\mathcal{N}_{\H}(0,\frac{1}{m})$. Moreover, let the~set $S\subset\{1,\ldots,n\}$ be such that $\# S=s\leq n$. For any $\delta\in\left(0,\frac{1}{\sqrt3}\right)$ and $\varepsilon\in(0,1)$, if
$$ m \geq \frac{10}{3}\delta^{-2}\left(14s + \ln\left(\frac{2}{\varepsilon}\right)\right),$$
then with probability at least $1-\varepsilon$ we have that
$$ \norm{\Fi_S^*\Fi_S-\mathbf{Id}}_{2\to 2}<\delta. $$
\end{lemma}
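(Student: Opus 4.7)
The plan is to execute the standard covering-number argument familiar from the real sub-Gaussian setting (\cite[section~9.1]{introCS}), with the Gamma-type tail~\eqref{Restimation} from the Corollary above playing the role of the real Bernstein bound. Since $\herm{\Fi_S}\Fi_S-\I$ is Hermitian, Lemma~\ref{herm-norm} reduces the task to a uniform estimate on the unit sphere of $\H^s$:
\begin{equation*}
\norm{\herm{\Fi_S}\Fi_S-\I}_{2\to 2} = \max_{\x\in\H^s,\,\norm{\x}_2=1}\left|\norm{\Fi_S\x}_2^2-1\right|.
\end{equation*}

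The first step is to discretize this sphere. Since the Hermitian form on $\H^s$ induces the Euclidean structure of $\R^{4s}$, the usual volume estimate provides a $\gamma$-net $\mathcal{N}_\gamma$ with $\lvert\mathcal{N}_\gamma\rvert\leq(1+2/\gamma)^{4s}$. A routine polarization using the Cauchy--Schwarz inequality in the right quaternion inner product space, combined with Lemma~\ref{herm-norm}, yields the standard Hermitian extension
\begin{equation*}
\norm{\herm{\Fi_S}\Fi_S-\I}_{2\to 2} \leq \frac{1}{1-2\gamma}\,\max_{\x\in\mathcal{N}_\gamma}\left|\norm{\Fi_S\x}_2^2-1\right|,
\end{equation*}
valid for $\gamma<1/2$; nothing in this deduction uses anything beyond Cauchy--Schwarz, so it transfers to the quaternion setting without modification.

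Next, I would fix the threshold $t=\delta(1-2\gamma)$ and apply~\eqref{Restimation} pointwise to each $\x\in\mathcal{N}_\gamma$ (viewing $\Fi_S\x$ as $\Fi$ acting on the zero-extension of $\x$ to $\H^n$, so that Lemma~\ref{Rdistribution} and the Corollary apply directly), followed by a union bound, to obtain
\begin{equation*}
\Prob\!\left(\norm{\herm{\Fi_S}\Fi_S-\I}_{2\to 2}\geq\delta\right) \leq 2\,(1+2/\gamma)^{4s}\exp\!\left(-\frac{2m\,\delta^2(1-2\gamma)^2}{5}\right).
\end{equation*}

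Finally, I would calibrate $\gamma$ to match the stated constants. Choosing $\gamma=(2-\sqrt{3})/4$ gives $1-2\gamma=\sqrt{3}/2$ and $(1-2\gamma)^2=3/4$, which produces the prefactor $\tfrac{10}{3}$ in the lower bound on $m$; moreover the hypothesis $\delta<1/\sqrt{3}$ is exactly what ensures $t=\delta(1-2\gamma)\leq\tfrac{1}{2}$, keeping us inside the admissible range of~\eqref{Restimation}. With this $\gamma$ we have $1+2/\gamma=17+8\sqrt{3}$, and a short numerical check gives $4\ln(17+8\sqrt{3})<14$, so demanding $m\geq\tfrac{10}{3}\delta^{-2}(14s+\ln(2/\varepsilon))$ forces the right-hand side above to be at most~$\varepsilon$. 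The main obstacle is thus only careful bookkeeping: all analytic ingredients (the Hermitian quadratic-form characterization, Cauchy--Schwarz, the Gamma concentration) are already available in the quaternion setting, but the specific constants $\tfrac{10}{3}$ and $14s$ force this particular choice of $\gamma$, and one must be vigilant to remain inside the sub-exponential regime $t\leq\tfrac{1}{2}$.
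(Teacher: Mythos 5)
Your proposal is correct and follows essentially the same route as the paper: a $\gamma$-net of the unit sphere of quaternion vectors supported on $S$ (of cardinality at most $(1+2/\gamma)^{4s}$), the sub-exponential tail bound \eqref{Restimation} applied pointwise with a union bound, and the Hermitian extension from the net to the whole sphere via Lemma~\ref{herm-norm} and the quaternion Cauchy--Schwarz inequality. The only divergence is the final calibration of $\gamma$: the paper takes $\gamma=2/(e^{7/2}-1)$ so that $\ln(1+2/\gamma)=7/2$ is exact and then bounds $(1-2\gamma)^{-2}\le 4/3$, whereas you take $\gamma=(2-\sqrt{3})/4$ so that $(1-2\gamma)^2=3/4$ is exact and then bound $4\ln(17+8\sqrt{3})<14$ --- both yield the stated constants $\tfrac{10}{3}$ and $14s$, and your choice has the minor advantage that $\delta<1/\sqrt{3}$ is then precisely the condition keeping $\tilde{\delta}=\delta(1-2\gamma)$ within the admissible range $[0,\tfrac12]$ of \eqref{Restimation}.
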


\begin{proof}
Fix a~set $S\subset\{1,\ldots,n\}$ with $\# S=s$ and denote
$$ \mathcal{A}_S=\left\{\x\in\H^n:\quad \supp\,\x\subset S \quad \wedge \quad \norm{\x}_2=1\right\}. $$
This set can be associated with the~unit sphere ${\sphere}^{4s-1}$ in~$\R^{4s}$.

\medskip
Take a~number $0<\gamma<\frac{1}{2}$ (the~exact value of~$\gamma$ will be specified later). By {\cite[Proposition~C.3]{introCS}} there exists a~$\gamma$-covering~$\mathcal{A}_{\gamma}$ of~$\mathcal{A}_S$ such that
$$ \# \mathcal{A}_{\gamma}\leq \left(1+\frac{2}{\gamma}\right)^{4s}. $$
For any~$0\leq\tilde{\delta}\leq\frac{1}{2}$, using~\eqref{Restimation}, we get that
\begin{align*}
\Prob\left(\exists_{\y\in\mathcal{A}_{\gamma}}\quad \abs{\norm{\Fi\y}_2^2-\norm{\y}_2^2}\geq\tilde{\delta}\norm{\y}_2^2\right) &= 
					\Prob\left( \bigcup\limits_{\y\in\mathcal{A}_{\gamma}} \left\{ \abs{\norm{\Fi\y}_2^2-\norm{\y}_2^2}\geq\tilde{\delta}\norm{\y}_2^2\right\}\right) \\
	&\leq \sum\limits_{\y\in\mathcal{A}_{\gamma}} \Prob\left(\abs{\norm{\Fi\y}_2^2-\norm{\y}_2^2}\geq\tilde{\delta}\norm{\y}_2^2\right) \\
	&\leq \#\mathcal{A}_{\gamma} \cdot 2\exp\left(-\frac{\tilde{\delta}^2}{2\sigma^2}\right) \\
	&\leq 2\left(1+\frac{2}{\gamma}\right)^{4s}\exp\left(-\frac{\tilde{\delta}^2}{2\sigma^2}\right)
\end{align*}
with $\sigma^2=\frac{5}{4m}$. This implies that
$$ \Prob\left(\forall_{\y\in\mathcal{A}_{\gamma}}\quad \abs{\norm{\Fi\y}_2^2-\norm{\y}_2^2}<\tilde{\delta}\norm{\y}_2^2\right)\geq 
				1 - 2\left(1+\frac{2}{\gamma}\right)^{4s}\exp\left(-\frac{\tilde{\delta}^2}{2\sigma^2}\right). $$

Since $\mathcal{A}_{\gamma}\subset\mathcal{A}_S$, we obviously have that
\begin{equation}\label{goodmatrix}
	\forall_{\y\in\mathcal{A}_{\gamma}}\quad \abs{\norm{\Fi\y}_2^2-\norm{\y}_2^2}<\tilde{\delta}\norm{\y}_2^2 \quad\Leftrightarrow\quad
			\forall_{\y\in\mathcal{A}_{\gamma}}\quad \abs{\norm{\Fi_S\y_S}_2^2-\norm{\y_S}_2^2}<\tilde{\delta}\norm{\y_S}_2^2 
\end{equation}
and
$$ \abs{\norm{\Fi_S\y_S}_2^2-\norm{\y_S}_2^2} = \abs{\ip{(\Fi_S^*\Fi_S-\mathbf{Id})\y_S,\y_S}}.$$
For a~matrix~$\Fi$ satisfying \eqref{goodmatrix} denote $\B=\Fi_S^*\Fi_S-\mathbf{Id}$. Since all vectors in~$\mathcal{A}_{\gamma}$ are unit and supported on~$S$, \eqref{goodmatrix} implies that
$$ \forall_{\y\in\mathcal{A}_{\gamma}}\quad \left|\ip{\B\y_S,\y_S}\right|<\tilde{\delta}\norm{\y_S}_2^2=\tilde{\delta}\norm{\y}_2^2=\tilde{\delta}. $$
By the~definition of a~$\gamma$-covering, for every $\x\in\mathcal{A}_S$ there is some $\y\in\mathcal{A}_{\gamma}$ such that $\norm{\x-\y}_2\leq\gamma<\frac{1}{2}$. Since both $\x$ and $\y$ are unit and supported on~$S$, using properties of the~Hermitian norm and quaternion Cauchy-Schwarz inequality, we get that
\begin{align*}
	\abs{\ip{\B\x_S,\x_S}} & = \abs{\ip{\B\y_S,\y_S} + \ip{\B\x_S,\x_S-\y_S} + \ip{\B(\x_S-\y_S),\y_S}}\\
		&\leq \abs{\ip{\B\y_S,\y_S}} + \norm{\B}_{2\to 2}\norm{\x}_2\norm{\x-\y}_2 + \norm{\B}_{2\to 2}\norm{\x-\y}_2\norm{\y}_2 \\
		&< \tilde{\delta} + 2\gamma\norm{\B}_{2\to 2}.
\end{align*}
In view of Lemma~\ref{herm-norm}, since $\B$ is Hermitian, taking supremum over all $\x\in\mathcal{A}_S$, we obtain
$$ \norm{\B}_{2\to 2} < \tilde{\delta} + 2\gamma\norm{\B}_{2\to 2} \quad\Rightarrow\quad \norm{\B}_{2\to 2} < \frac{\tilde{\delta}}{1-2\gamma}. $$
Denoting $\delta=\frac{\tilde{\delta}}{1-2\gamma}\leq\frac{1}{2}\cdot\frac{1}{1-2\gamma}$ we get that
\begin{align*}
	\Prob\left(\norm{\Fi_S^*\Fi_S-\mathbf{Id}}_{2\to2}<\delta\right) &\geq
	\Prob\left(\forall_{\y\in\mathcal{A}_{\gamma}}\quad \abs{\norm{\Fi\y}_2^2-\norm{\y}_2^2}<\tilde{\delta}\norm{\y}_2^2\right) \\
	& \geq 1 - 2\left(1+\frac{2}{\gamma}\right)^{4s}\exp\left(-\frac{\tilde{\delta}^2}{2\sigma^2}\right) \\
	& = 1 - 2\left(1+\frac{2}{\gamma}\right)^{4s}\exp\left(-\frac{2}{5}\delta^2(1-2\gamma)^2\,m\right).
\end{align*}

It implies that if
$$ m \geq \frac{5}{2}\cdot\frac{\delta^{-2}}{(1-2\gamma)^2}\left(4s\cdot\ln\left(1+\frac{2}{\gamma}\right)+\ln\left(\frac{2}{\varepsilon}\right)\right), $$
then
\begin{equation}\label{big-probab}
	\Prob\left(\norm{\Fi_S^*\Fi_S-\mathbf{Id}}_{2\to2}<\delta\right) \geq 1 - \varepsilon.
\end{equation}
Taking $\gamma=\frac{2}{e^{7/2}-1}\approx6.23\cdot 10^{-2}$, for which $\frac{1}{(1-2\gamma)^2}\leq \frac{4}{3}$ and $\ln\left(1+\frac{2}{\gamma}\right)=\frac{7}{2}$, we finally obtain that for any positive $\delta\leq\frac{1}{2}\cdot\frac{2}{\sqrt{3}}=\frac{1}{\sqrt{3}}$, if
$$ m \geq \frac{10}{3}\delta^{-2}\left(14s + \ln\left(\frac{2}{\varepsilon}\right)\right), $$
then \eqref{big-probab} holds, which concludes the~proof.
\end{proof}

\medskip
We are ready to prove the~main result.
\begin{theorem}\label{GaussQRIP}
	Let $\Fi\in\H^{m\times n}$ be a~quaternion Gaussian matrix whose entries $\fii_{ij}$ are independent quaternion random variables with distribution $\mathcal{N}_{\H}(0,\frac{1}{m})$. For any $\delta\in\left(0,\frac{1}{\sqrt3}\right)$ and $\varepsilon\in(0,1)$, if
$$ m \geq \frac{10}{3}\delta^{-2}\left(15s + \ln\left(\frac{2}{\varepsilon}\right) + s\ln\left(\frac{n}{s}\right)\right),$$
then with probability at least $1-\varepsilon$ the~$s$-restricted isometry constant~$\delta_s$ of~$\Fi$ satisfies $\delta_s<\delta$.
\end{theorem}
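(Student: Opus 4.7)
The plan is to reduce Theorem~\ref{GaussQRIP} to Lemma~\ref{suppRIP} by a union bound over all possible support sets. Recall from~\eqref{sRIPconstant} that the $s$-restricted isometry constant may be written as
$$ \delta_s = \max_{S\subset\{1,\ldots,n\},\,\#S\leq s}\norm{\herm{\Fi_S}\Fi_S-\I}_{2\to 2}. $$
Since this maximum can be restricted to sets with $\#S = s$ (a vector with support strictly smaller than $s$ is also $s$-sparse, so the corresponding constraint is subsumed by considering some superset of size exactly $s$), it suffices to bound the event that $\norm{\herm{\Fi_S}\Fi_S-\I}_{2\to 2}\geq\delta$ holds for \emph{some} $S$ of cardinality exactly $s$.

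First I would apply Lemma~\ref{suppRIP} with a~tolerance parameter $\varepsilon'$ (to be chosen) to each individual support $S$ with $\#S=s$. There are $\binom{n}{s}$ such sets, so by the union bound
$$ \Prob\left(\exists\,S,\;\#S=s:\;\norm{\herm{\Fi_S}\Fi_S-\I}_{2\to 2}\geq\delta\right) \leq \binom{n}{s}\varepsilon', $$
provided $m\geq\frac{10}{3}\delta^{-2}\left(14s+\ln\left(\frac{2}{\varepsilon'}\right)\right)$. Setting $\varepsilon' = \varepsilon/\binom{n}{s}$ makes the failure probability at most $\varepsilon$, at the cost of requiring
$$ m\geq \frac{10}{3}\delta^{-2}\left(14s+\ln\left(\frac{2}{\varepsilon}\right)+\ln\binom{n}{s}\right). $$

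The final step is to absorb the binomial coefficient into a cleaner bound. Using the standard inequality $\binom{n}{s}\leq\left(\frac{en}{s}\right)^{s}$ we have $\ln\binom{n}{s}\leq s+s\ln(n/s)$, and adding this to the $14s$ term produces $15s+s\ln(n/s)$, which matches the statement of the theorem. I do not expect any real obstacle here: the probabilistic heavy lifting was already done in Lemma~\ref{suppRIP}, and the remaining work is a union bound combined with a routine estimate of the binomial coefficient. The only minor point worth double-checking is the reduction from $\#S\leq s$ to $\#S=s$, which is legitimate because the operator norm $\norm{\herm{\Fi_S}\Fi_S-\I}_{2\to 2}$ over a support $S$ equals the supremum of $\bigl|\norm{\Fi\x}_2^2-\norm{\x}_2^2\bigr|/\norm{\x}_2^2$ over non-zero vectors supported in $S$, and enlarging $S$ to size exactly $s$ only enlarges the feasible set and hence the supremum.
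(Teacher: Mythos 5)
Your proposal is correct and follows essentially the same route as the paper: a union bound over the $\binom{n}{s}$ supports combined with the estimate $\binom{n}{s}\leq(en/s)^s$, the only cosmetic difference being that you invoke Lemma~\ref{suppRIP} as a black box with $\varepsilon'=\varepsilon/\binom{n}{s}$ while the paper reuses the explicit tail bound from inside that lemma's proof — these are algebraically equivalent. Your explicit justification of the reduction from $\#S\leq s$ to $\#S=s$ is a point the paper glosses over, and it is handled correctly.
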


\begin{proof}	
Using \eqref{sRIPconstant}, the~proof of Lemma~\ref{suppRIP} and well known estimates of the~Newton's symbol we get that
\begin{align*}
	\Prob(\delta_s\geq\delta) &= \Prob\left(\max\limits_{S\colon \# S=s} \norm{\Fi_S^*\Fi_S-\mathbf{Id}}_{2\to2}\geq\delta\right) \\
		&= \Prob\left( \exists_{S\colon \# S=s}\quad \norm{\Fi_S^*\Fi_S-\mathbf{Id}}_{2\to2}\geq\delta\right) \\
		&= \Prob\left(\bigcup\limits_{S\colon \# S=s} \left\{\norm{\Fi_S^*\Fi_S-\mathbf{Id}}_{2\to2}\geq\delta\right\} \right) \\
		&\leq \sum\limits_{S\colon \# S=s} \Prob\left(\norm{\Fi_S^*\Fi_S-\mathbf{Id}}_{2\to2}\geq\delta\right) \\
		&\leq {n \choose s}\cdot2\left(1+\frac{2}{\gamma}\right)^{4s}\exp\left(-\frac{2}{5}\delta^2(1-2\gamma)^2\,m\right) \\
		&\leq 2\left(\frac{\mathrm{e} n}{s}\right)^s \left(1+\frac{2}{\gamma}\right)^{4s}\exp\left(-\frac{2}{5}\delta^2(1-2\gamma)^2\,m\right).
\end{align*}
Therefore if
$$ m \geq \frac{5}{2}\cdot\frac{\delta^{-2}}{(1-2\gamma)^2}\left(s\ln\left(\frac{\mathrm{e}n}{s}\right)+4s\cdot\ln\left(1+\frac{2}{\gamma}\right)+\ln\left(\frac{2}{\varepsilon}\right)\right), $$
then $\Prob(\delta_s<\delta)\geq 1-\varepsilon$. Taking again $\gamma=\frac{2}{e^{7/2}-1}$ we get the~thesis.
\end{proof}

\section{Numerical experiment}\label{numexp}

For the~sake of illustrating our discussion regarding the~RICs and the~RIVs~\cite{jameslee,james1,james2}, we performed a~numerical experiment of evaluating empirical distributions of the~random variables $\delta_s^R$, $\delta_s^L$, $\Delta_s^R$ and $\Delta_s^L$ for the~case of real Gaussian matrices $\Fi\in\R^{m\times n}$. The~experiment was carried out in MATLAB R2016a on a~standard PC machine, with Intel(R) Core(TM) i7-4790 CPU (3.60GHz), 16GB RAM and with Microsoft Windows 10 Pro.

In order to efficiently derive empirical distributions, we set the~size of Gaussian matrices $\Fi\in\R^{m\times n}$ to $m=64$, $n=8$, and the~sparsity of unit ($\norm{\x}_2=1$) vectors $\x\in\R^{n}$ to $s=5$. This choice of $n$ and $s$ gave us ${n \choose s}=56$ different support sets $S\subset\{1,\ldots,n\}$ with $\# S=s$, and for each support~$S$ we drew $10^3$ unit vectors $\x\in\R^n$ with~$\supp\x=S$. Hence, for every realization of the~random variable~$\Fi$, we took maximum over the~set consisting of $56\cdot 10^3$ elements to estimate distributions of $\delta_s^R$ and $\delta_s^L$. Estimating  distributions of $\Delta_s^R$ and $\Delta_s^L$ required only one vector~$\x$ with $\supp\x=S$ for each support set~$S$ . The~matrix sample consisted of $10^5$ realizations of the~random variable~$\Fi$.

Results of this experiment, i.e. empirical probability and cumulative distribution functions, are shown in~Fig.\ref{fig:5_1} and~\ref{fig:5_2}. As one can see, knowledge of any~upper bound of the~variables $\Delta_s^R$ and $\Delta_s^L$ (with certain probability) brings no estimate on the~upper bound of the~RICs~$\delta_s^R$ and~$\delta_s^L$.

\begin{figure}[ht]
	\centerline{
	\epsfig{file=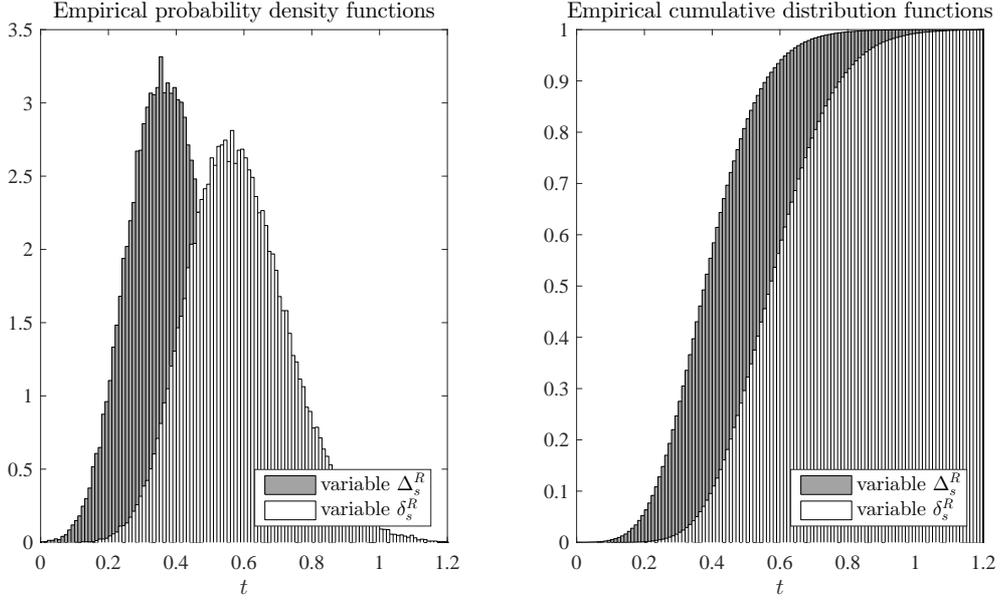,width=\textwidth}
	}
	\caption{Empirical distributions of $\delta_s^R$ and $\Delta_s^R$ for Gaussian random matrices $\Fi\in\R^{m\times n}$, $n=8$, $m=64$, $s=5$.}
	\label{fig:5_1}
\end{figure}

\begin{figure}[!h]
	\centerline{
	\epsfig{file=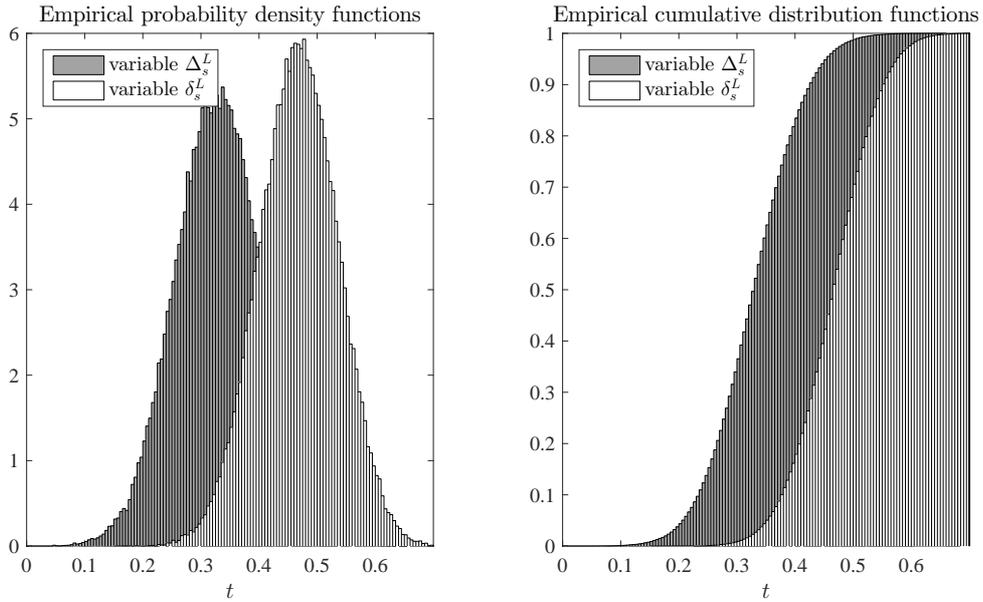,width=\textwidth}
	}
	\caption{Empirical distributions of $\delta_s^L$ and $\Delta_s^L$ for Gaussian random matrices $\Fi\in\R^{m\times n}$, $n=8$, $m=64$, $s=5$.}
	\label{fig:5_2}
\end{figure}

\medskip
In the~second experiment we derived empirical distribution of the~restricted isometry constant~$\delta_s$ for quaternion Gaussian random matrices and compared it with the~case of real Gaussian random matrices. We carried out the~simulations for matrices $\Fi\in\K^{m\times n}$, where $\K=\H$ or $\K=\R$, with parameters $n=256$, $m=64$, and for unit $s$-sparse vectors $\x\in\K^{n}$ such that $s=\frac{m}{2}=32$. 

Recall that the~$s$-restricted isometry constant can be equivalently defined as
$$ \delta_s = \max \abs{\norm{\Fi\x}_2^2-1}, $$
where the maximum is taken over the~set of all $s$-sparse vectors $\x\in\K^n$ such that $\norm{\x}_2=1$. To estimate the~cumulative distribution function of the~random variable~$\delta_s$ for each random matrix~$\Fi$ we drew $10^5$ $s$-sparse unit vectors and took maximum over this set. The~matrix sample consisted of $10^4$ realizations of the~random variable~$\Fi$ in both cases. Entries of the~matrices were independently sampled from the~normal distribution with mean zero and variance~$\frac{1}{m}$, i.e. $\Normal\left(0,\frac{1}{m}\right)$ in the~real case and $\Normal_{\H}\left(0,\frac{1}{m}\right)$ in the~quaternion case.

Results of this experiment -- for real and quaternion random matrices -- are shown in Fig.~\ref{fig:5_3} and~Fig.~\ref{fig:5_4} respectively. In view of the~previous theoretical considerations, we can see that (with certain probability) the~$s$-restricted isometry constant of a~quaternion Gaussian random matrix is small. It proves, in particular, that there exist quaternion matrices satisfying the~RIP. Moreover, notice that the~$s$-restricted isometry constant of a~quaternion random matrix $\Fi\in\H^{m\times n}$ is statistically smaller than its real counterpart for a~matrix $\Fi\in\R^{m\times n}$ of the~same size. As we previously mentioned, we think that the~reason for this phenomenon is smaller variance of the~Rayleigh random variable in the~quaternion case (Lemma~\ref{Rdistribution}).

\begin{figure}[!h]
	\centerline{
	\epsfig{file=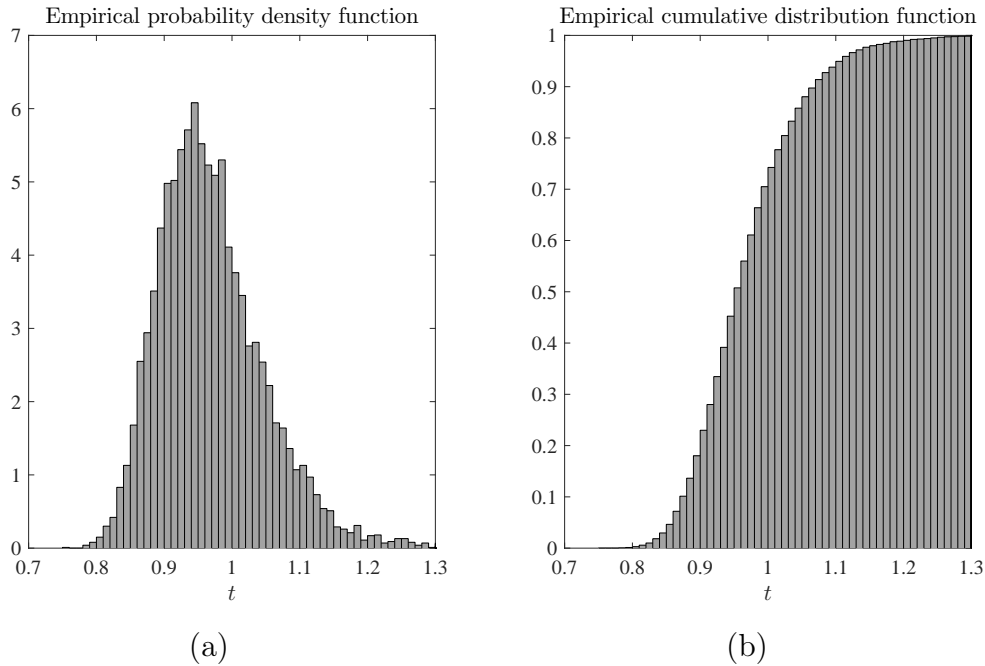,width=\textwidth}
	}
	\begin{minipage}{0.09\textwidth}\mbox{} \end{minipage}
	\begin{minipage}{0.36\textwidth} \centerline{(a)} \end{minipage}
	\begin{minipage}{0.07\textwidth}\mbox{} \end{minipage}
	\begin{minipage}{0.35\textwidth} \centerline{(b)}\end{minipage}
	\begin{minipage}{0.06\textwidth}\mbox{} \end{minipage}

	\caption{Empirical distributions of $\delta_s$ for Gaussian random matrices $\Fi\in\R^{m\times n}$, $n=256$, $m=64$, $s=32$.}
	\label{fig:5_3}
\end{figure}

\begin{figure}[!h]
	\centerline{
	\epsfig{file=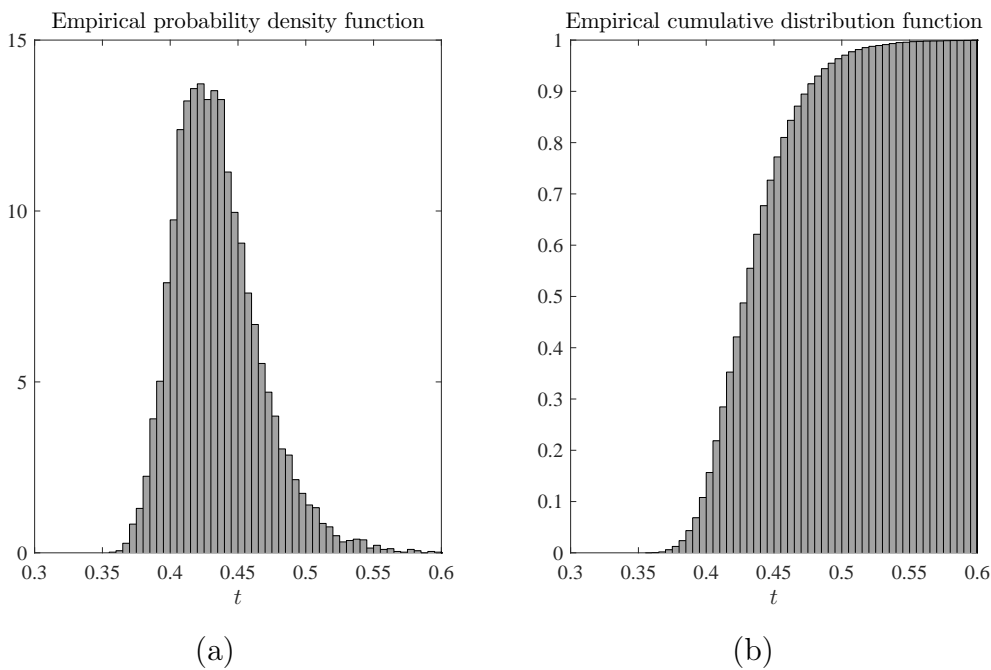,width=\textwidth}
	}
	\begin{minipage}{0.09\textwidth}\mbox{} \end{minipage}
	\begin{minipage}{0.36\textwidth} \centerline{(a)} \end{minipage}
	\begin{minipage}{0.07\textwidth}\mbox{} \end{minipage}
	\begin{minipage}{0.35\textwidth} \centerline{(b)}\end{minipage}
	\begin{minipage}{0.06\textwidth}\mbox{} \end{minipage}

	\caption{Empirical distributions of $\delta_s$ for Gaussian random matrices $\Fi\in\H^{m\times n}$, $n=256$, $m=64$, $s=32$.}
	\label{fig:5_4}
\end{figure}

Finally, one might be interested in comparing these results with the~formulation of~Theorem~\ref{GaussQRIP}. We should remember, however, that the~estimate in this theorem is significant only for very large $n$ (more than few thousand) and $s\ll n$ (in practice, less than a~dozen) and in our experiment, due to computational reasons, $n=256$ and $s=32$. As we also commented, we are aware that this~result is not sharp and the~numerical experiment additionally suggests that it can be improved in future.

\section{Conclusion}

This article brings positive answer to the~question about existence of quaternion matrices satisfying the~RIP. We confirm that -- as it was conjectured -- restricted isometry constants of Gaussian quaternion matrices are small with big probability (and typically smaller than their real counterparts). Together with our previous result, in which we proved that quaternion measurement matrices with small RIP constants allow exact reconstruction of sparse quaternion vectors, it explains success of compressed sensing based experiments in the~quaternion algebra and brings hope for their wider applications.

The~main result,  however, in the~current form is not sharp. One of the~reasons is that we used techniques previously applied to the~case of real subgaussian random matrices. It would be interesting in the~future to study distribution of singular values of quaternion Gaussian matrices in order to improve our result. Other direction of the~further research might be search for other quaternion matrices satisfying the~RIP, in particular, those more desired from the~application point of view, e.g. partial quaternion DFT matrix.

\bigskip\bigskip
\noindent
\textbf{Acknowledgments}

The~research was supported in part by WUT grant No. 504/02499/1120. 
The~work conducted by the~second author
was supported by a scholarship from Fundacja Wspierania Rozwoju Radiokomunikacji i Technik Multimedialnych.

\bigskip


\begin{thebibliography}{99}

	\bibitem{bb1} A. Bade\'nska, \L. B\l aszczyk, Compressed sensing for real measurements of quaternion signals, preprint (2015), arXiv:1605.07985.
	
	\bibitem{bb2} A. Bade\'nska, \L. B\l aszczyk, Compressed sensing in the~quaternion algebra, preprint (2017), 	arXiv:1704.08202.
	
	\bibitem{barthelemy2015} Q. Barth\'elemy, A. Larue, J. Mars, Color Sparse Representations for Image Processing: Review, Models, and Prospects, \emph{IEEE Trans Image Process}, (2015), 1--12.
	
	\bibitem{ct} E. J. Cand\`es, T. Tao, Decoding by linear programming, \emph{IEEE Trans. Inform. Theory} 51 (12) (2005) 4203--4215. 
	
	\bibitem{introCS} S. Foucart, H. Rauhut, \emph{A mathematical introduction to compressive sensing}, Applied and Numerical Harmonic Analysis, Birkh\"auser/Springer, New York (2013).
	
	\bibitem{ghk} N. Gomes, S. Hartmann, U. K\"ahler, Compressed Sensing for Quaternionic Signals, \emph{Complex Anal. Oper. Theory} 11 (2017), 417--455.
	
	\bibitem{hawes2014} M. B. Hawes, W. Liu, A Quaternion-Valued Reweighted Minimisation Approach to Sparse Vector Sensor Array Design, \emph{Proceedings of the 19th International Conference on Digital Signal Processing}, (2014), 426--430.
	
	\bibitem{jameslee} O. James, H.-N. Lee, Restricted Isometry Random Variables: Probability Distributions, RIC Prediction and Phase Transition Analysis for Gaussian Encoders, \emph{preprint} (2015), arXiv:1410.1956v4. 
	
	\bibitem{james1} O. James, On the RIP of Real and Complex Gaussian Sensing Matrices via RIV framework in Sparse Signal Recovery Analysis, \emph{IEEE sponsored 2nd International Conference on Electronics and Communications System} (2015).
	
	\bibitem{james2} O. James, Revisiting the RIP of Real and Complex Gaussian Sensing Matrices Through RIV Framework, \emph{Wireless Pers Commun}, 87 (2016), no. 2, 513--526.

	\bibitem{jensen} D. R. Jensen, A~note on positive dependence and the structure of bivariate distributions, \emph{SIAM J. Appl. Math.} 20 (1971), 749--753. 
	
	\bibitem{lb} N. Le Bihan, The geometry of proper quaternion random variables, \emph{Signal Processing} 138 (2017), 106--116.
		
	\bibitem{royen} T. Royen, Some probability inequalities for multivariate gamma and normal distributions, \emph{Far East J. Theor. Stat.} 51 (2015), no. 1-2, 17--36. 

	\bibitem{SeDuMi} J. F. Sturm, Using SeDuMi 1.02, A Matlab toolbox for optimization over symmetric cones, \emph{Optimization Methods and Software} 11 (1999), no. 1-4, 625--653.
	
	\bibitem{vc} N. N. Vakhania, G. Z. Chelidze, Quaternion Gaussian random variables, (Russian) \emph{Teor. Veroyatn. Primen.} 54 (2009), no. 2, 337--344; translation in
\emph{Theory Probab. Appl.} 54 (2010), no. 2, 363--369.

	\bibitem{Wainwright} M. Wainwright, Basic tail and concentration bounds, \emph{High-dimmensional statistics: A non-asymptotic viewpoint} (Ed. M. Wainwright), manuscript (2016(.
	
	\bibitem{l1minqs} J. Wu, X. Zhang, X. Wang, L. Senhadji, H. Shu, L1-norm minimization for quaternion signals, \emph{Journal of Southeast University} 1 (2013) 33--37.
		

\end{thebibliography}
\end{document}